\newcommand{\iu}{\underline i}
\definecolor{MyLightGrey}{rgb}{0.95,0.95,0.95}
\title{On certain matrix algebras related to quasi-Toeplitz matrices
}
\author{Dario A.~Bini \and Beatrice Meini }
\institute{D.~A.~Bini\at
              Universit\`a di Pisa, Italy \\
     \email{dario.bini@unipi.it}
           \and
           B.~Meini\at
           Universit\`a di Pisa, Italy \\
     \email{beatrice.meini@unipi.it} \\
     Corresponding author
}
\begin{document}
\maketitle

\begin{flushleft}
 \textit{Dedicated to our friend and colleague Marc Van Barel}    
\end{flushleft}

 \abstract{Let $A_\alpha$ be the semi-infinite tridiagonal matrix having subdiagonal and superdiagonal unit entries, $(A_\alpha)_{11}=\alpha$,  where  $\alpha\in\mathbb C$, and zero elsewhere. A basis $\{P_0,P_1,P_2,\ldots\}$ of the linear space $\mathcal P_\alpha$ spanned by the powers of $A_\alpha$ is determined, where $P_0=I$, 
 $P_n=T_n+H_n$,  $T_n$ is the symmetric Toeplitz matrix having ones in the $n$th super- and sub-diagonal, zeros elsewhere, and $H_n$ is the Hankel matrix with
 first row $[\theta\alpha^{n-2}, \theta\alpha^{n-3},  \ldots, \theta, \alpha, 0, \ldots]$, where $\theta=\alpha^2-1$.
The set $\mathcal P_\alpha$ is an algebra, and for
$\alpha\in\{-1,0,1\}$, $H_n$ has only one nonzero anti-diagonal. This fact is exploited to provide a better representation of symmetric quasi-Toeplitz matrices $\mathcal {QT}_S$,  where, instead of
representing a generic matrix $A\in\mathcal{QT}_S$ as $A=T+K$, where $T$ is Toeplitz and 
$K$ is compact, it is represented as $A=P+H$, 
where $P\in\mathcal P_\alpha$
and $H$ is compact. 
It is shown experimentally that the matrix arithmetic obtained this way is much more effective than that 
implemented in the toolbox {\tt CQT-Toolbox} of {\em Numer.~Algo.} 81(2):741--769, 2019.
}

\section{Introduction}
In this paper, we analyze structural and computational properties of the linear space $\mathcal P_\alpha$ spanned by the powers of the semi-infinite tridiagonal matrix $A_\alpha=(a_{i,j})_{i,j\in\mathbb Z^+}$, where $a_{i,i+1}=a_{i+1,i}=1$, $a_{1,1}=\alpha$ and $a_{i,j}=0$ elsewhere, i.e.,
\[
A_\alpha=\begin{bmatrix}
\alpha&1\\
1&0&1\\
 &\ddots&\ddots&\ddots
\end{bmatrix}.
\]

More specifically, let us denote $a(z)=\sum_{i\in\mathbb Z}a_iz^i$ the formal Laurent power series having coefficients $a_i\in\mathbb C$ and let $T(a)$ be the associated Toeplitz matrix defined by $t_{i,j}=a_{j-i}$. Recall that $T(a)$ is symmetric if and only if $a_i=a_{-i}$ for $i\in\mathbb Z^+$.
We show that { a semi-infinite matrix} $A$ is in $\mathcal P_\alpha$ if and only if there exists a vector $a=(a_i)_{i\ge 0}$ such that, for $a(z)=a_0+\sum_{i=1}^\infty a_i(z+z^{-1})$, one has
$A=T(a)+H_\alpha(a)$, where  $H_\alpha=(h_{i,j})$, $h_{i,j}=\eta_{i+j-1}$ is the Hankel matrix whose first column $\eta=(\eta_i)$ is such that
\[
\eta=\begin{bmatrix}
v_2&v_3&v_4&v_5&\ldots&\ldots&\ldots\\
v_1&v_2&v_3&v_4&v_5&\ddots&\ddots\\
   &v_1&v_2&v_3&v_4&v_5&\ddots\\
   &   &\ddots&\ddots&\ddots&\ddots&\ddots
\end{bmatrix}
\begin{bmatrix}
a_1\\ a_2\\ a_3 \\ \vdots
\end{bmatrix},
\]
where $v_1=1$, $v_2=\alpha$, $v_{3+i}=\theta \alpha^i$, $i=0,1,\ldots$, and $\theta=\alpha^2-1$.
{ Here, we assume conditions on $\alpha$ and $a_i$ under which the components $\eta_i$ have finite values. }
Observe that the above matrix is a Toeplitz matrix in Hessenberg form. Moreover, for $\alpha=0$ the matrix is tridiagonal skew-symmetric, while, if $\alpha=\pm 1$ the matrix is bidiagonal and lower triangular.

Moreover we show that if $|\alpha|\le 1$ and the coefficients $a_i$ are such that
$\sum_{i=1}^\infty |a_i|<\infty$,
then the matrices in $\mathcal P_\alpha$ have  2-norm and infinity norm bounded from above, so that  $\mathcal P_\alpha$ is a Banach algebra if endowed with $\|\cdot\|_2$ or with $\|\cdot\|_\infty$.

If $|\alpha|>1$ and the coefficients $a_i$ are restricted to values such that the function $a(z)=a_0+\sum_{i=1}^\infty a_i(z^i+z^{-i})$ is analytic in the annulus $\mathcal A(r)= \{ z\in\mathbb C~ | \quad r^{-1}<|z|<r \}$ 
for some $r>|\alpha|$, then $\mathcal P_\alpha$ is still a Banach algebra.

The above results are rephrased in the case where the matrix $A_\alpha$ is truncated to size $n$ by setting $a_{n,n}=\beta$ for $\beta\in\mathbb C$. Let us denote by $A_{\alpha,\beta}$ the matrix obtained this way so that
\[
A_{\alpha,\beta}=\begin{bmatrix}
\alpha&  1  \\
1     &  0   &1\\
      &\ddots&\ddots &\ddots\\
      &      &1      &0     &1\\
      &      &       &1     &\beta
\end{bmatrix}.
\] 
In this case, the span $\mathcal P_{\alpha,\beta}$ of the powers $A_{\alpha,\beta}^i$, $i=0,1,\ldots,n-1$, is a matrix algebra formed by matrices of the kind $T(a)+H_\alpha(a)+JH_\beta(a)J$, where $J$ is the flip matrix having ones on the antidiagonal and zeros elsewhere.
This algebra, for $\alpha,\beta\in\{-1,0,1\}$ includes the algebras investigated in \cite{bini-capovani,bozzo-phd,bozzo-difiore,ekstrom,olshevsky}. 

As an application of these results, we provide a better representation of quasi-Toeplitz matrices having a symmetric Toeplitz part. We recall that
a semi-infinite matrix $A=(a_{i,j})_{i,j\in\mathbb Z^+}$ is quasi-Toeplitz (QT) if it can be written as $A=T(a)+E_A$ where $T(a)$ is the Toeplitz part and $E_A$ is a compact linear operator from $\ell^2$ in $\ell^2$, with $\ell^2=\{x=(x_i)_{i\in\mathbb Z^+}~|\quad ~\sum_{i\in\mathbb Z^+}|x_i|^2<\infty\}$
\cite{bmm-mathcomp}.

Quasi-Toeplitz matrices are encountered in queueing and network models, like the Tandem Jackson Network \cite{Motyer-Taylor}, that can be described by random walks in the quarter plane \cite{bmm-mathcomp,bmmr:scicomp}. In \cite{bmr:cqt} a matrix arithmetic for QT matrices has been designed and analyzed. This software, implemented as the Matlab toolbox {\tt CQT-toolbox}, has been used in \cite{bmm:simax} to solve certain quadratic matrix equations with QT coefficients encountered in the analysis of QBD processes \cite{blm:book}, \cite{lr:book}.

Here we show that representing a QT matrix $A=T(a)+E_A$, where $T(a)$ is symmetric, in the form $P_\alpha(a)+K_A$, for $K_A$ compact, is more convenient from the theoretical and the computational point of view.

In fact, since the set $\mathcal P_\alpha$ is an algebra, we find that $f(P_\alpha(a))=P_\alpha(f(a))$, for any rational function $f(z)$ defined in the set $\{a(z)~|\quad |z|=1\}$. This fact allows to reduce the computation of the matrix function $f(P_\alpha(a))$ to the composition of two functions. This property is not satisfied by the matrices $T(a)$ since symmetric Toeplitz matrices do not form a matrix algebra.

From the computational point of view, we compared the representation of QT matrices given in the {\tt CQT-Toolbox}, designed in \cite{bmr:cqt}, with the new representation based on the algebra $\mathcal P_\alpha$. As test problems, we considered the solution of a matrix equation with QT coefficients performed by means of fixed point iterations, and the computation of the square root of a real symmetric QT matrix. In both cases,
 with the new representation we obtain substantial acceleration in the CPU time with respect to the standard representation given in \cite{bmr:cqt}.

The paper is organized as follows. In the next Section 2, we briefly recall some preliminary notions needed in our analysis. In Section 3, { we introduce the linear space spanned by the powers of $A_\alpha$ and analyze its structural properties}. In Section 4 we discuss some boundedness issues and give conditions in order that $\mathcal P_\alpha$ is a Banach algebra. The cases where $\alpha\in\{-1,0,1\}$ are presented in Section 5. In Section 6 we show an application of these results to provide a better representation of QT matrices. Section 7 reports the results of some numerical experiments and finally Section 8 draws the conclusions. 

\section{Preliminaries}
Denote $\mathbb T=\{z\in\mathbb C~|\quad |z|=1\}$ the unit circle in the complex plane, $\mathbb Z$ the set of relative integers, $\mathbb Z^+=\{i\in\mathbb Z~|\quad i>0\}$ the set of positive integers.
Let $\mathcal W=\{f(z)=\sum_{i\in\mathbb Z}f_iz^i:\mathbb T\to\mathbb C~|\quad \|f\|_W:=\sum_{i\in\mathbb Z}|f_i|<+\infty\}$
be the Wiener algebra. 
 Given $a(z)=\sum_{i\in\mathbb Z}a_iz^i\in\mathcal W$, let $T(a)=(t_{i,j})_{i,j\in\mathbb Z^+}$ be the semi-infinite Toeplitz matrix with entries 
$t_{i,j}=a_{j-i}\in\mathbb C$, for $i,j\in\mathbb Z^+$.

We say that $A$ is a quasi-Toeplitz matrix (in short QT-matrix) if $A=T(a)+E_A$, where  $E_A$ 
represents a compact linear operator from $\ell^2$ into $\ell^2$, with $\ell^2=\{x=(x_i)_{i\in\mathbb Z^+}~|\quad \sum_{i=1}^\infty |x_i|^2<\infty\}$.
We refer to $T(a)$ as the {\em Toeplitz part} of $A$ and to $E_A$ as the {\em compact part} of $A$, and we write $E_A=\kappa(A)$. The set of QT matrices is denoted by $\mathcal {QT}$.

 Denote by $\mathcal W_S\subset\mathcal W$ the set of functions $a(z)=\sum_{i\in\mathbb Z}a_iz^i\in\mathcal W$ having symmetric coefficients, i.e., such that $a_i=a_{-i}$, and by $\mathcal{QT}_S$ the set  of QT matrices of the kind $A=T(a)+E_A$ where $a\in\mathcal W_S$.

Given QT matrices $A=T(a)+E_A$, $B=T(b)+E_B$, we 
write $A\doteq B$ if $a(z)=b(z)$, that is, if $A-B$ is a compact operator.

Let $v(z)=\sum_{i=1}^\infty v_{i}z^i\in\mathcal W$ and denote with the same symbol $v$ the semi-infinite column vector $v=(v_1,v_2,\ldots)^T$. Define $H(v)=(v_{i+j-1})_{i,j\in\mathbb Z^+}$ the semi-infinite Hankel matrix associated with $v(z)$.
Observe that the first column of $H(v)$ coincides with the vector $v$.

Given $a(z)=\sum_{i\in\mathbb Z}a_iz^i\in\mathcal W$, define the following power series in the Wiener class: $a_+(z)=\sum_{i=1}^{\infty}a_iz^{i}$ and 
$a_-(z)=\sum_{i=1}^{\infty}a_{-i}z^{i}$.

We { recall the following classical result, see for instance \cite{BG:book2005}}.

\begin{theorem}\label{thm:bg}
For any $a(z)\in\mathcal W$,  the matrices 
$H(a_+)$ and $H(a_-)$ are compact operators.  Moreover, for $b(z)\in\mathcal W$ it holds that
\[
T(a)T(b)=T(ab)-H(a_-)H(b_+),
\]
where $H(a_-)H(b_+)$ is compact.
\end{theorem}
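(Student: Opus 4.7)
The plan is to handle the two assertions separately: first compactness of the Hankel pieces, then the algebraic identity, then compactness of the product as a corollary.

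For the compactness of $H(a_+)$ (the case of $H(a_-)$ is identical), I would exploit the density of trigonometric polynomials in the Wiener algebra. Define the truncations $a_+^{(N)}(z) = \sum_{i=1}^N a_i z^i$. The matrix $H(a_+^{(N)})$ has entries $a_{i+j-1}$ for $i+j-1 \le N$ and zero otherwise, so only finitely many nonzero entries are supported in the triangle $\{(i,j) : i+j \le N+1\}$; in particular it has rank at most $N$ and is therefore compact. I then need the operator-norm bound $\|H(v)\|_2 \le \|v\|_W$ for any $v \in \mathcal W$ with no constant/negative coefficients (which follows by writing the Hankel operator as the compression of multiplication by $v$ to an appropriate subspace, or by estimating $\|H(v)x\|_2$ via Young/Cauchy-Schwarz). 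Applying this to the symbol $a_+ - a_+^{(N)}$, whose Wiener norm is the tail $\sum_{i>N}|a_i|$, yields $\|H(a_+) - H(a_+^{(N)})\|_2 \le \sum_{i>N}|a_i| \to 0$ as $N \to \infty$. Since the set of compact operators on $\ell^2$ is norm-closed, $H(a_+)$ is compact.

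For the product identity, the natural approach is to compare entries. Writing out
\[
(T(a)T(b))_{i,j} = \sum_{k=1}^{\infty} a_{k-i} b_{j-k}, \qquad (T(ab))_{i,j} = (ab)_{j-i} = \sum_{k \in \mathbb Z} a_{k-i} b_{j-k},
\]
(the second equality after reindexing the Cauchy product by $k = m+i$), the difference collapses to the missing tail $\sum_{k \le 0} a_{k-i} b_{j-k}$. Reindexing this via $k = 1-\ell$ with $\ell \ge 1$ yields $\sum_{\ell=1}^\infty a_{-(i+\ell-1)} b_{j+\ell-1}$, which is exactly $\sum_\ell (a_-)_{i+\ell-1} (b_+)_{\ell+j-1} = (H(a_-) H(b_+))_{i,j}$. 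Hence $T(ab) - T(a)T(b) = H(a_-)H(b_+)$, which is the claimed identity. A minor justification is needed to legitimize the interchange of summations implicit in the Cauchy-product step, but this is immediate because $a,b \in \mathcal W$ forces absolute convergence of all series involved.

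Finally, the compactness of $H(a_-)H(b_+)$ is automatic: the first step shows $H(a_-)$ is compact, while $H(b_+)$ is bounded on $\ell^2$ (by the same Wiener-norm estimate applied to $b_+$), and the composition of a compact operator with a bounded one is compact. The main technical obstacle is really the operator-norm estimate used in step one; everything else is bookkeeping. Since this bound is standard for Wiener symbols and in fact cited from \cite{BG:book2005}, I would state it as a lemma or simply invoke it.
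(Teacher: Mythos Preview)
Your argument is correct and follows the standard route for this classical result. Note, however, that the paper does not actually prove Theorem~\ref{thm:bg}: it is stated without proof and attributed to the literature (Böttcher--Grudsky \cite{BG:book2005}) as a well-known fact. So there is no ``paper's own proof'' to compare against; your write-up simply supplies the details that the paper intentionally omits. The approximation of $H(a_+)$ by finite-rank truncations in the Wiener norm, the entrywise verification of the Widom-type identity $T(a)T(b)=T(ab)-H(a_-)H(b_+)$, and the conclusion via the ideal property of compact operators are exactly the ingredients one finds in the cited reference.
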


The above result implies that
 for any QT matrices $A=T(a)+E_A$, $B=T(b)+E_B$ it holds  $AB\doteq T(ab)$. As a consequence we have
\[
(T(a)+E_A)^n\doteq T(a^n).
\]

In the following, we restrict our attention to the subset $\mathcal{QT_S}$.

\section{A family of matrix algebras}\label{sec:basic}

For a given $\alpha\in\mathbb C$, denote 
\begin{equation}\label{eq:aalpha}
    A_\alpha=T(z+z^{-1})+\alpha e_1e_1^T=
    \begin{bmatrix}
    \alpha&1\\
    1&0&1\\
     &\ddots&\ddots&\ddots
    \end{bmatrix},
\end{equation}
where $e_1$ is the first column of the semi-infinite identity matrix $I$, and blanks denote null entries.
The goal of this section is to provide a structural analysis of the matrices belonging to the linear space spanned by the powers $A_\alpha^i$, $i=0,1,\ldots,n$ { for any $n\in\mathbb Z^+$, that is,
\[
\mathcal S_{n,\alpha}=\left\{\sum_{i=0}^ns_iA_\alpha^i\quad |\quad \quad s_i\in\mathbb R\right\},\quad n\in\mathbb Z^+.
\]
} In this regard we will provide a different basis of this space that allows one to represent each matrix in the space as the sum of a banded Toeplitz and a Hankel matrix where only the Hankel component depends on $\alpha$.

More specifically,
we will introduce a new set $\{P_{0,\alpha},P_{1,\alpha},\ldots,P_{n,\alpha}\}$, of symmetric QT matrices having a better representation than $A_\alpha^i$, and prove that { this set is a basis of the linear space $\mathcal S_{n,\alpha}$.}

Set $\theta=\alpha^2-1$,  and define the polynomials in the variable $z$
\begin{equation}\label{eq:v}
\begin{split}
& h_{1}(z)=\alpha z,  \\
& h_{n}(z)=\theta\sum_{i=1}^{n-1}\alpha^{n-i-1}z^i+\alpha z^{n},~~ n\ge 2.
\end{split}
\end{equation}
Denote with the same symbol $h_{n}$ the infinite column vector formed by the coefficients of the term $z^i$, for $i=1,\ldots,n$, in the polynomial $h_{n}(z)$, filled with zeros. Denote also
\begin{equation}\label{eq:hn}
    H_{n,\alpha}=H(h_{n})
\end{equation}
the semi-infinite Hankel matrix associated with $h_{n}$ whose first column is the vector $h_n$. Finally,
define \begin{equation}\label{eq:pn}
\begin{split}
&P_{0,\alpha}=I,\\
&P_{n,\alpha}=T(z^n+z^{-n})+H_{n,\alpha}, \quad \hbox{for }n\ge 1,
\end{split}
\end{equation}
and observe that $P_{1,\alpha}=A_\alpha$.
For notational simplicity, if the dependence on $\alpha$ is clear from the context,
 we will write  $P_n$ in place of $P_{n,\alpha}$, and $H_n$ in place of $H_{n,\alpha}$.

In particular, for $n=4$ we have
\[
h_{4}=(\theta\alpha^2,\theta\alpha,\theta,\alpha,0,\ldots)^T
\]
and
\[
P_4=\begin{bmatrix}
\theta\alpha^2&\theta\alpha&\theta&\alpha&1&\\
\theta\alpha &\theta&\alpha&&&1\\
\theta&\alpha&&&&&1\\
\alpha&      &&&&&&1\\
1     &      &&&&&&&\ddots\\
&1\\
&&\ddots
\end{bmatrix},
\]
 where blank entries stand for zeros.

 We need { to prove a few preliminary results  to show that the  set $\{P_{i,\alpha}:~i=0,1,\ldots,n\}$ is a basis of the linear space $\mathcal S_{n,\alpha}$.}
The first result that we need is the following identity, that is a direct consequence of the binomial expansion of $(z+z^{-1})^n$:
\begin{equation}\label{eq:id}
\begin{split}
    &(z+z^{-1})^n=\sum_{i=0}^{\lfloor\frac {n-1}2\rfloor}
    \binom{n}{i}
    (z^{n-2i}+z^{-n+2i})+\varphi_n\\
&\varphi_n=0 \hbox{ if }n\hbox{ is odd, } \quad \varphi_n=
\binom{n}{\frac n2}
 \hbox{ if }n \hbox{ is even}.
\end{split}
\end{equation}

Another useful property concerns the compact part $K_n:=\kappa(A_\alpha^n)$ of the QT matrix $A_\alpha^n$, i.e.,  $K_n=A_\alpha^n-T((z+z^{-1})^n)$.

\begin{lemma}\label{lem1}
The compact part $K_n$ of $A_\alpha^n$ is a Hankel matrix. Moreover, its first column $k_n=K_ne_1$ is such that
\begin{equation}\label{eq:lem1}
k_{n+1}=A_\alpha k_n+\sigma_ne_1,~~n\ge 1,
\end{equation}
where $\sigma_n=\alpha\binom{n}{n/2}$
 if $n$ is even, $\sigma_n=-\binom{n}{\lfloor n/2\rfloor}$
 % $ -{n\choose\lfloor n/2\rfloor}$ 
if $n$ is odd.
\end{lemma}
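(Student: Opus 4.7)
I would prove the lemma by induction on $n$, simultaneously establishing the Hankel structure of $K_n$ and the stated recursion. The base case $n=1$ is immediate: $K_1 = A_\alpha - T(z+z^{-1}) = \alpha e_1 e_1^T$ is a Hankel matrix with first column $k_1 = \alpha e_1$.

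For the inductive step, assume $(K_n)_{i,j}=k_{n,i+j-1}$. Expanding $A_\alpha^{n+1}=A_\alpha(T_n+K_n)$ yields
\[
K_{n+1} = (A_\alpha T_n - T_{n+1}) + A_\alpha K_n,
\]
and I would analyze each summand separately. Theorem~\ref{thm:bg}, applied with $(z+z^{-1})_-=z$ so that $H((z+z^{-1})_-)=e_1 e_1^T$, gives $T(z+z^{-1})T_n = T_{n+1} - e_1 e_1^T H(((z+z^{-1})^n)_+)$; combining this with $\alpha e_1 e_1^T T_n$ shows that $A_\alpha T_n - T_{n+1}$ is supported on its first row, with $(1,j)$-entry $\alpha b^n_{j-1}-b^n_j$, where $b^n_k$ denotes the coefficient of $z^k$ in $(z+z^{-1})^n$. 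A parallel calculation using the Hankel form of $K_n$ shows that the entries of $A_\alpha K_n$ are $k_{n,i+j-2}+k_{n,i+j}$ for every $i\ge 2$ (a Hankel pattern depending only on $i+j$) and $\alpha k_{n,j}+k_{n,j+1}$ in the first row.

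Summing these two contributions, the rows $i\ge 2$ of $K_{n+1}$ depend only on $i+j$, so the Hankel recurrence $k_{n+1,m}=k_{n,m-1}+k_{n,m+1}$ already holds for $m\ge 2$, which matches $(A_\alpha k_n)_m$ in those positions. The main obstacle is to verify that the first row of $K_{n+1}$ conforms to the same Hankel pattern, i.e., that $(K_{n+1})_{1,j}=(K_{n+1})_{2,j-1}$ for all $j\ge 2$. Substituting the explicit formulas above and invoking $k_{n,i}=a^n_i-b^n_{i-1}$ with $a^n_i:=(A_\alpha^n)_{i,1}$, this reduces to the auxiliary identity
\[
\alpha a^n_j - a^n_{j-1} = b^n_j - b^n_{j-2}, \qquad j\ge 2,
\]
which I would establish by a secondary induction on $n$ from the elementary first-column recurrences $a^{n+1}_j=a^n_{j-1}+a^n_{j+1}$ (valid for $j\ge 2$) and $b^{n+1}_j=b^n_{j-1}+b^n_{j+1}$.

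Finally, to pin down $\sigma_n$, I would evaluate $(K_{n+1})_{1,1}$ and subtract $(A_\alpha k_n)_1 = \alpha k_{n,1}+k_{n,2}$, obtaining $\sigma_n = \alpha b^n_0 - b^n_1$. The expansion \eqref{eq:id} yields $b^n_0=\binom{n}{n/2}$ with $b^n_1=0$ when $n$ is even, and $b^n_0=0$ with $b^n_1=\binom{n}{(n-1)/2}$ when $n$ is odd, matching the two stated values of $\sigma_n$.
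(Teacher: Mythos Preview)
Your proof is correct and follows the same inductive skeleton as the paper: write $K_{n+1}=A_\alpha K_n+(A_\alpha T_n-T_{n+1})$, use Theorem~\ref{thm:bg} to see that the second summand is supported on the first row, deduce that rows $i\ge 2$ of $K_{n+1}$ are Hankel, and read off $\sigma_n=\alpha b_0^n-b_1^n$ from the $(1,1)$ entry. The derivation of $\sigma_n$ and its evaluation via \eqref{eq:id} are identical to the paper's.

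The one genuine difference is how you reconcile the first row with the Hankel pattern. You do it computationally, reducing to the auxiliary identity $\alpha a_j^n-a_{j-1}^n=b_j^n-b_{j-2}^n$ and proving that by a secondary induction on~$n$. The paper instead notes at the outset that $K_{n+1}=A_\alpha^{n+1}-T(g^{n+1})$ is symmetric (both terms are), and then observes that a symmetric matrix whose rows $i\ge 2$ form a Hankel pattern is automatically Hankel: for $j\ge 2$ one has $(K_{n+1})_{1,j}=(K_{n+1})_{j,1}=(K_{n+1})_{2,j-1}$, the last equality because both entries lie in rows $\ge 2$ and have the same anti-diagonal index. This symmetry shortcut replaces your entire secondary induction with a one-line argument, so you may want to adopt it; conversely, your explicit identity gives a self-contained verification that does not rely on recognizing the symmetry of $K_{n+1}$ in advance.
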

\begin{proof}
Since $K_n=A_\alpha^n-T((z+z^{-1})^n)$ and both addends are symmetric, then $K_n$ is symmetric. 
We prove that $K_n$ is Hankel by induction on $n$. Clearly, the property is true for $n=1$ where $K_n=\alpha e_1e_1^T$. Assume by inductive assumption that $K_n$ is Hankel. Then,
since $K_{n+1}=\kappa(A_\alpha  A_\alpha^n)$, a direct inspection  shows that
\[
K_{n+1}=A_\alpha\kappa(A_\alpha^n)+\kappa(A_\alpha)T(g^n)+\kappa(T(g)T(g^n)),
\]
where $g(z)=z+z^{-1}$. Since $\kappa(A_\alpha)=\alpha e_1e_1^T$, $\kappa(A_\alpha^n)=K_n$, and, from Theorem \ref{thm:bg}, $\kappa(T(g)T(g^n))=e_1e_1^T H(g^n)$, we find that
\begin{equation}\label{eq:lem1tmp}
K_{n+1}=A_\alpha K_n+\alpha e_1e_1^TT(g^n)+e_1e_1^TH(g^n).
\end{equation}
Removing the first row of $K_{n+1}$ we are left with the submatrix of 
$A_\alpha K_n$ 
obtained by removing its first row. This coincides with the product
\[
\begin{bmatrix}
1& 0 &1\\
 &1& 0 &1\\
 & &\ddots&\ddots&\ddots
\end{bmatrix}K_n,
\]
that is, the sum of $K_n$ plus the matrix obtained by shifting up the rows of $K_n$ by two places. Since both the addends are Hankel matrices by the inductive assumption, then their sum is Hankel as well. Now, since $K_{n+1}$ is symmetric and its submatrix obtained removing the first row is Hankel, then the whole matrix $K_{n+1}$ is Hankel.

In order to prove \eqref{eq:lem1}, it is sufficient to multiply both sides of \eqref{eq:lem1tmp} to the right by $e_1$. This manipulation yields
\[
k_{n+1}=A_\alpha k_n+(\alpha e_1^TT(g^n)e_1)e_1+(e_1^TH(g^n)e_1)e_1.
\]
To complete the proof, it is sufficient to determine the value of $\sigma_n:= \alpha e_1^TT(g^n)e_1+e_1^TH(g^n)e_1$. Now, $\gamma_n:=e_1^TT(g^n)e_1$ is the constant term of the Laurent polynomial $g(z)^n$. In view of \eqref{eq:id}, we have $\gamma_n=0$ if $n$ is odd and $\gamma_ n = 
\binom{n}{n/2}$ if $n$ is even. On the other hand, $\delta_n:=e_1^TH(g^n)e_1$ is the coefficient of $z+z^{-1}$ in the Laurent polynomial $g(z)^n$, and applying once again \eqref{eq:id} we get $\delta_n=
\binom{n}{\lfloor n/2\rfloor}$
% {n\choose \lfloor n/2\rfloor}$
if $n$ is odd, while $\delta_n=0$ if $n$ is even. This proves \eqref{eq:lem1} with $\sigma_n=\alpha\gamma_n-\delta_n$ that completes the proof. \qed
\end{proof}

Let $h_n(z)$ be the function defined in \eqref{eq:v}, consider the matrix $H_n=H(h_n)$  which is the compact part of $P_n=T(z^n+z^{-n})+H_n$,
and recall that $h_n=H_ne_1$ is the first column of $H_n$.

The following result relates the vectors $h_n$ and the matrix $A_\alpha$.

\begin{lemma} \label{lem2}
Let $h_{n}(z)$ be the function defined in \eqref{eq:v}, $h_n$ the associated coefficient vector, and let $A_\alpha=T(z+z^{-1})+\alpha e_1e_1^T$.  Then we have
\begin{equation}\label{eq:lem2}
A_\alpha h_n=h_{n-1}+h_{n+1},\quad n\ge 1,
\end{equation}
where we have set $h_0:=e_1$.
\end{lemma}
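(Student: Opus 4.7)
The plan is to verify the identity componentwise using the explicit coefficients extracted from (\ref{eq:v}). From that formula, for $n \geq 2$ one reads off $h_n^{(k)} = \theta \alpha^{n-k-1}$ for $1 \leq k \leq n-1$, $h_n^{(n)} = \alpha$, and $h_n^{(k)} = 0$ for $k \geq n+1$, while the boundary values are $h_0 = e_1$ and $h_1 = \alpha e_1$. Since $A_\alpha = T(z+z^{-1}) + \alpha e_1 e_1^T$, the $i$-th component of $A_\alpha h_n$ equals $h_n^{(i-1)} + h_n^{(i+1)}$ for $i \geq 2$ (with the convention $h_n^{(0)} := 0$) and $\alpha h_n^{(1)} + h_n^{(2)}$ for $i = 1$. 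The claim therefore reduces to matching these values against $h_{n-1}^{(i)} + h_{n+1}^{(i)}$ for every $i \geq 1$.

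I would first dispatch the small cases $n = 1$ and $n = 2$ by direct computation, because the boundary vectors $h_0 = e_1$ and $h_1 = \alpha e_1$ fall outside the generic formula for $h_n$ with $n \geq 2$; both reduce to a one-line check using $1 + \theta = \alpha^2$. For $n \geq 3$ the comparison splits into four index ranges. The interior range $2 \leq i \leq n-2$ is immediate: both sides equal $\theta \alpha^{n-i-2}(1+\alpha^2)$. The endpoint range $i \in \{n-1, n, n+1\}$ mixes the generic $\theta\alpha^{\cdot}$ entries with the special values $\alpha$ and $0$, and matching the two sides is again a short algebraic manipulation via $1 + \theta = \alpha^2$. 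The most delicate case is $i = 1$, where the correction $\alpha e_1 e_1^T$ in $A_\alpha$ is essential: the left-hand side becomes $\alpha\,\theta\alpha^{n-2} + \theta\alpha^{n-3} = \theta\alpha^{n-3}(\alpha^2 + 1)$, and this matches $h_{n-1}^{(1)} + h_{n+1}^{(1)} = \theta\alpha^{n-3} + \theta\alpha^{n-1}$. For $i \geq n+2$ both sides vanish.

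The main obstacle is not conceptual but bookkeeping: one must track with care which branch of the piecewise formula for $h_n^{(k)}$ applies at each pair $(n,i)$, and when $n-1 \leq 1$ so that the ad hoc boundary definitions of $h_0$ and $h_1$ intervene. A more compact equivalent presentation would work at the level of generating functions: if $v(z) = \sum_{k \geq 1} v_k z^k$ corresponds to a column $v$, then $A_\alpha v$ corresponds to $(z+z^{-1})v(z) + v_1(\alpha z - 1)$, and the claim becomes the polynomial identity
\[
(z+z^{-1})\,h_n(z) + h_n^{(1)}(\alpha z - 1) = h_{n-1}(z) + h_{n+1}(z),
\]
which can be checked in closed form using the compact representation $h_n(z) = \alpha z^n + \theta z\,(z^{n-1}-\alpha^{n-1})/(z-\alpha)$ for $n \geq 2$, reducing the whole difference on the left, after a common denominator, to the single term $\theta\alpha^{n-2}(1-\alpha z)$.
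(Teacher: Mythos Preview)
Your proof is correct and follows essentially the same strategy as the paper: handle $n=1,2$ by direct inspection, verify the first component separately for $n\ge 3$, and then check the remaining components. The one notable difference is in the last step: where you split $i\ge 2$ into the ranges $2\le i\le n-2$, $i\in\{n-1,n,n+1\}$, and $i\ge n+2$ and compute each case, the paper collapses all of $i>1$ into a single line by introducing the shift matrix $Z=T(z)$, noting $ZA_\alpha=I+Z^2$, and using the structural identity $Zh_{m+1}=h_m$ (equivalently $Z^2h_n=h_{n-2}$), so the claim for $i>1$ becomes $(I+Z^2)h_n=Zh_{n-1}+Zh_{n+1}$, which is immediate. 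Your generating-function variant is a nice alternative not present in the paper; both your case analysis and the paper's shift-matrix trick are short and elementary, but the latter avoids the piecewise bookkeeping you flagged as the main obstacle.
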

\begin{proof}
The expressions for $A_\alpha h_1$ and $A_\alpha h_2$ follow by a direct inspection.
For $n\ge 3$,
consider the $i$th component in both sides of equation \eqref{eq:lem2}. For $i=1$, we have 
$(A_\alpha h_n)_1=\theta \alpha^{n-1}+\theta\alpha^{n-3}$, while $(h_{n+1})_1=\theta\alpha^{n-1}$, $(h_{n-1})_1=\theta\alpha^{n-3}$.
This shows that \eqref{eq:lem2} is satisfied in the first component. Concerning the case $i>1$, we  may equivalently rewrite the condition 
$(A_\alpha h_n)_i=(h_{n-1})_i+(h_{n+1})_i$, $i>1$,
as $ZA_\alpha h_n=Zh_{n-1}+Zh_{n+1}$ where 
\[
Z:=T(z)=
\begin{bmatrix}
0&1\\
 &0&1\\
 & &\ddots&\ddots
\end{bmatrix}
\]
is the semi-infinite shift matrix, and get $(I+Z^2)h_n=Zh_{n-1}+Zh_{n+1}$ since $ZA_\alpha=I+Z^2$. This equation is satisfied since $Zh_{n+1}=h_n$ and $Zh_{n-1}=h_{n-2}=Z^2h_n$.
\qed
\end{proof}

Now, we are ready to prove the main result of this section.

\begin{theorem}\label{thm:main}
Let $K_n=\kappa(A_\alpha^n)$ and $H_n=H(h_{n})$, where  $h_{n}(z)$ is the function defined in \eqref{eq:v}.
For the vectors $h_n=H_ne_1$ and $k_n=K_ne_1$
we have
\[
k_n=\sum_{i=0}^{\lfloor\frac {n-1}2\rfloor}
\binom{n}{i}h_{n-2i},\quad n=1,2,\ldots .
\]
\end{theorem}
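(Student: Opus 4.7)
The plan is induction on $n$, driven by the two recurrences established above: Lemma~\ref{lem1} gives $k_{n+1} = A_\alpha k_n + \sigma_n e_1$, while Lemma~\ref{lem2} gives $A_\alpha h_j = h_{j-1} + h_{j+1}$ for $j \ge 1$ (with the convention $h_0 := e_1$). The base case $n = 1$ is immediate: since $K_1 = \alpha e_1 e_1^T$ one has $k_1 = \alpha e_1$, and also $h_1 = \alpha e_1$, so $k_1 = \binom{1}{0} h_1$ as required.

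For the inductive step, I would substitute the inductive hypothesis into the Lemma~\ref{lem1} recurrence and apply Lemma~\ref{lem2} termwise, obtaining
\begin{equation*}
k_{n+1} \;=\; \sum_{i=0}^{\lfloor (n-1)/2 \rfloor} \binom{n}{i}\bigl(h_{n-2i-1} + h_{n-2i+1}\bigr) \;+\; \sigma_n e_1.
\end{equation*}
Shifting the index in one of the two inner sums and collecting the coefficient of each $h_{n+1-2i}$ produces Pascal pairs $\binom{n}{i-1}+\binom{n}{i}$, which collapse to $\binom{n+1}{i}$---precisely the coefficient required by the target formula $\sum_{i=0}^{\lfloor n/2\rfloor}\binom{n+1}{i}h_{n+1-2i}$.

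The main obstacle is the bookkeeping of the boundary terms, where the parity of $n$ matters. When $n$ is even, the lowest index of $h$ in the inductive hypothesis is $2$, so Lemma~\ref{lem2} produces no $h_0 = e_1$; meanwhile the correction $\sigma_n e_1 = \alpha\binom{n}{n/2}e_1 = \binom{n}{n/2}h_1$ (using $h_1 = \alpha e_1$) supplies exactly the ``missing'' Pascal partner for the coefficient of $h_1$. When $n$ is odd, Lemma~\ref{lem2} applied to the lowest-index term $h_1$ yields $A_\alpha h_1 = e_1 + h_2$, so an $e_1$ appears with coefficient $\binom{n}{(n-1)/2}$; this is cancelled exactly by $\sigma_n e_1 = -\binom{n}{(n-1)/2}e_1$, leaving no genuine $h_0$-contribution. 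The delicate point, then, is verifying that the sign and magnitude of $\sigma_n$ prescribed by Lemma~\ref{lem1} are precisely those that make the inductive step close on the span of the $h_j$'s via Pascal's identity; once this is checked, the induction is complete.
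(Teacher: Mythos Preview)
Your proposal is correct and follows essentially the same route as the paper's own proof: induction on $n$, with the base case $k_1=h_1$, the inductive step driven by combining Lemma~\ref{lem1} with Lemma~\ref{lem2}, and Pascal's identity used to reassemble the coefficients. The parity split and the role of $\sigma_n$ in providing (for $n$ even) or cancelling (for $n$ odd) the boundary contribution are handled exactly as in the paper.
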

\begin{proof}
We prove this identity by induction on $n$. For $n=1$ we have $k_1=\alpha e_1=h_1$. For the inductive step, assume that the identity holds for $n$ and prove it for $n+1$. 
By Lemma \ref{lem1} and by the inductive assumption, we have
\begin{equation}\label{eq:p1}
k_{n+1}=A_\alpha k_n+\sigma_n e_1=A_\alpha\sum_{i=0}^{\lfloor\frac {n-1}2\rfloor}\binom{n}{i}
h_{n-2i}+\sigma_n e_1.
\end{equation}
If $n$ is odd, applying Lemma \ref{lem2} yields
\[\begin{split}
k_{n+1}=&\sum_{i=0}^{\frac {n-1}2}\binom{n}{i}
(h_{n-2i-1}+h_{n-2i+1})+\sigma_n e_1\\
=&h_{n+1}+\sum_{i=1}^{\frac {n-1}2}\left(
\binom{n}{i}+\binom{n}{i-1}
\right)
h_{n-2i+1} +\binom{n}{\frac{n-1}2}
h_0+\sigma_ne_1.
\end{split}\]
Now, recall that $h_0=e_1$, and $\sigma_n=
-\binom{n}{frac{n-1}2}$ so that
$\binom{n}{\frac{n-1}2} h_0+\sigma_ne_1=0$, moreover,
\begin{equation}\label{eq:p2}
\binom{n}{i}+\binom{n}{i-1}
=\binom{n+1}{i},\quad i\ge 1,
\end{equation}
so that we
get
\[
k_{n+1}=h_{n+1}+\sum_{i=1}^{\frac {n-1}2}
\binom{n+1}{i} %{n+1\choose i}
h_{n-2i+1}=\sum_{i=1}^{\frac {n+1}2}
\binom{n+1}{i}
%{n+1\choose i}
h_{n+1-2i},
\]
which completes the inductive step for $n$ odd.

Now consider the case where $n$ is even.
Then applying Lemma \ref{lem2} to \eqref{eq:p1} yields 
\[\begin{split}
k_{n+1}=&\sum_{i=0}^{{\frac n2}-1}
\binom{n}{i}
%{n\choose i}
(h_{n-2i-1}+h_{n-2i+1})+
\binom{n}{\frac n2-1}
%{n\choose\frac n2-1 }
h_1+\sigma_n e_1\\
=&h_{n+1}+\sum_{i=1}^{{\frac n2}-1}\left(
\binom{n}{i}+\binom{n}{i-1}
%{n\choose i}+{n\choose i-1}
\right)
h_{n-2i+1} +
\binom{n}{\frac n2-1}
%{n\choose\frac n2-1}
h_1 +\sigma_n e_1.
\end{split}\]
Since $h_1=\alpha e_1$ and $\sigma_n=\alpha\binom{n}{\frac n2}
%{n\choose \frac n2}
$, applying \eqref{eq:p2} yields
\[
k_{n+1}=\sum_{i=0}^{\frac {n}2}
\binom{n+1}{i}
%{n+1\choose i}
h_{n+1-2i},
\]
that is the desired equation since, for $n$ even, $\frac n2=\lfloor\frac {n+1} 2\rfloor$.\qed
\end{proof}

An immediate corollary of Theorem \ref{thm:main} is given below

\begin{corollary} For the matrices $A_\alpha^n$, $n=1,2,\ldots$, we have 
\begin{equation}\label{eq:tmp}
A_\alpha^n=\sum_{i=0}^{\lfloor\frac {n-1}2\rfloor}
\binom{n}{i}
%{n\choose i}
P_{n-2i,\alpha}+\varphi_n I,
\end{equation}
where $P_{i,\alpha}$, $i=0,1,\ldots,n$, are the matrices defined in \eqref{eq:pn} and $\varphi_n$ is defined in \eqref{eq:id}.
\end{corollary}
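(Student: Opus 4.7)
The plan is to split $A_\alpha^n$ into its Toeplitz and compact parts and match these parts separately against the right-hand side of \eqref{eq:tmp}. Recall from the definition that $A_\alpha^n = T((z+z^{-1})^n) + K_n$ with $K_n=\kappa(A_\alpha^n)$, and that $P_{m,\alpha}=T(z^m+z^{-m})+H_{m,\alpha}$ for $m\ge 1$, while $P_{0,\alpha}=I$.

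First I would handle the Toeplitz part by invoking \eqref{eq:id} together with the linearity of $T(\cdot)$ and the fact that $T(1)=I$:
\[
T((z+z^{-1})^n) = \sum_{i=0}^{\lfloor (n-1)/2\rfloor}\binom{n}{i}\, T(z^{n-2i}+z^{-n+2i}) + \varphi_n I.
\]
Note that in this range $n-2i\ge 1$, so each Toeplitz piece on the right is exactly the Toeplitz part of some $P_{n-2i,\alpha}$.

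Next I would match the Hankel parts. By Lemma \ref{lem1}, $K_n$ is a semi-infinite Hankel matrix, and by definition each $H_{m,\alpha}$ is too. Since a semi-infinite Hankel matrix is uniquely determined by its first column, it suffices to verify that the first columns agree. But Theorem \ref{thm:main} gives precisely
\[
k_n = \sum_{i=0}^{\lfloor (n-1)/2\rfloor}\binom{n}{i}\, h_{n-2i},
\]
which is exactly the first column of $\sum_i \binom{n}{i}H_{n-2i,\alpha}$. Therefore
\[
K_n = \sum_{i=0}^{\lfloor (n-1)/2\rfloor}\binom{n}{i}\, H_{n-2i,\alpha}.
\]

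Adding the two identities term by term and regrouping yields
\[
A_\alpha^n = \sum_{i=0}^{\lfloor (n-1)/2\rfloor}\binom{n}{i}\bigl(T(z^{n-2i}+z^{-n+2i})+H_{n-2i,\alpha}\bigr) + \varphi_n I = \sum_{i=0}^{\lfloor (n-1)/2\rfloor}\binom{n}{i}\, P_{n-2i,\alpha} + \varphi_n I,
\]
which is \eqref{eq:tmp}. There is no real obstacle here, since all the substantive work has been done in Lemmas \ref{lem1}--\ref{lem2} and Theorem \ref{thm:main}; the only minor care needed is checking that the indices $n-2i$ stay in the range $[1,n]$ (so that the $P_{n-2i,\alpha}$ with $m\ge 1$ definition applies) and that the residual constant term of $(z+z^{-1})^n$ is absorbed into $\varphi_n I$ rather than into a $P_{0,\alpha}$ term inside the sum.
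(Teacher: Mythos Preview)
Your proof is correct and follows essentially the same approach as the paper: split into Toeplitz and compact parts, invoke \eqref{eq:id} for the symbol identity, and use Theorem \ref{thm:main} together with the fact that the Hankel parts are determined by their first columns. The paper's argument is identical in structure and detail.
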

\begin{proof}
We separately show that the Toeplitz part and the compact part in both sides of \eqref{eq:tmp} coincide. Concerning the Toeplitz part, 
it is sufficient to prove the equality of the symbols, that is,
\[
(z+z^{-1})^n=\sum_{i=0}^{\lfloor\frac {n-1}2\rfloor}
\binom{n}{i}
%{n\choose i}
(z^{n-2i}+z^{-n+2i})+\varphi_n,
\]
which coincides with equation \eqref{eq:id}.
Concerning the compact part, since $\kappa(A^n)$ and $\kappa(P_{n-2i})$ are Hankel matrices defined by their first column $k_n$ and $h_{n-2i}$, respectively, we may equivalently rewrite the condition
\[
\kappa(A_\alpha^n)=\sum_{i=0}^{\lfloor\frac {n-1}2\rfloor}
\binom{n}{i}
%{n\choose i}
\kappa(P_{n-2i})
\]
as  
\[
k_n=\sum_{i=0}^{\lfloor\frac {n-1}2\rfloor}
\binom{n}{i}
%{n\choose i}
h_{n-2i}.
\]
The latter vector equation follows directly from Theorem \ref{thm:main}.\qed
\end{proof}

The following result shows that equation \eqref{eq:tmp} allows to express $P_{n,\alpha}$ as a linear combination of $A_\alpha^i$, for $i=0,\ldots,n$.

\begin{corollary}
The linear space spanned by the matrices $P_{i,\alpha}$, $i=0,1,\ldots,n$, defined in \eqref{eq:pn}, coincides with $\mathcal S_{n,\alpha}$.
\end{corollary}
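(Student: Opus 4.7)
The plan is to prove the two inclusions $\mathcal S_{n,\alpha}\subseteq \mathrm{span}\{P_{0,\alpha},\ldots,P_{n,\alpha}\}$ and $\mathrm{span}\{P_{0,\alpha},\ldots,P_{n,\alpha}\}\subseteq \mathcal S_{n,\alpha}$ separately, using the identity \eqref{eq:tmp} established in the previous corollary as the only real ingredient. The key observation is that the coefficient of $P_{n,\alpha}$ on the right-hand side of \eqref{eq:tmp} is $\binom{n}{0}=1$, so the change-of-basis matrix between $\{A_\alpha^0,\ldots,A_\alpha^n\}$ and $\{P_{0,\alpha},\ldots,P_{n,\alpha}\}$ is triangular with unit diagonal, hence invertible.

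First, for the inclusion $\mathcal S_{n,\alpha}\subseteq\mathrm{span}\{P_{0,\alpha},\ldots,P_{n,\alpha}\}$, I would simply read off \eqref{eq:tmp}: each $A_\alpha^k$ with $0\le k\le n$ is a linear combination of matrices $P_{k-2i,\alpha}$ with $0\le i\le \lfloor(k-1)/2\rfloor$, together with the term $\varphi_k I=\varphi_k P_{0,\alpha}$. Since all indices $k-2i$ and $0$ lie in $\{0,\ldots,n\}$, every generator $A_\alpha^k$ of $\mathcal S_{n,\alpha}$ already sits in $\mathrm{span}\{P_{0,\alpha},\ldots,P_{n,\alpha}\}$, giving the first inclusion.

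For the reverse inclusion I would proceed by induction on $n$. The base cases $n=0$ and $n=1$ are immediate since $P_{0,\alpha}=I=A_\alpha^0$ and $P_{1,\alpha}=A_\alpha$. For the inductive step, suppose $P_{j,\alpha}\in\mathcal S_{j,\alpha}\subseteq \mathcal S_{n,\alpha}$ for every $j<n$. Rearranging \eqref{eq:tmp} and isolating the $i=0$ term, which is exactly $P_{n,\alpha}$, one obtains
\[
P_{n,\alpha}=A_\alpha^n-\sum_{i=1}^{\lfloor (n-1)/2\rfloor}\binom{n}{i}P_{n-2i,\alpha}-\varphi_n I.
\]
On the right-hand side, $A_\alpha^n\in\mathcal S_{n,\alpha}$ by definition, and each $P_{n-2i,\alpha}$ with $i\ge 1$ belongs to $\mathcal S_{n-2i,\alpha}\subseteq\mathcal S_{n,\alpha}$ by the inductive hypothesis, while $I=A_\alpha^0\in\mathcal S_{n,\alpha}$. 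Hence $P_{n,\alpha}\in\mathcal S_{n,\alpha}$, completing the induction.

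The two inclusions together yield the claimed equality. There is no real obstacle here: the content of the corollary is entirely encoded in the unit-triangular structure of the transformation given by \eqref{eq:tmp}, and the proof is a routine inversion of that relation.
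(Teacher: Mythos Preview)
Your proof is correct and follows essentially the same approach as the paper: both directions rely on equation \eqref{eq:tmp}, with the reverse inclusion obtained by isolating $P_{n,\alpha}$ and inducting on $n$. Your write-up is in fact slightly more explicit than the paper's, which merely states that ``by means of an inductive argument'' each $P_{j,\alpha}$ is a linear combination of $A_\alpha^0,\ldots,A_\alpha^j$.
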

\begin{proof}
Clearly, equation \eqref{eq:tmp} provides an explicit expression of $A_\alpha^n$ in terms of the matrices $P_{i,\alpha}$, $i=0,1,\ldots,n$. Conversely, we may write \eqref{eq:tmp} as $P_{n,\alpha}= A_\alpha^n -\sum_{i=1}^{\lfloor \frac {n-1}2\rfloor}
\binom{n}{i}
%{n\choose i}
P_{n-2i,\alpha}+\varphi_n I$. By means of an inductive argument it follows that any matrix $P_{j,\alpha}$, for $j=0,1,\ldots,n$, can be expressed as a linear combination of the matrices $A_\alpha^i$ for $i=0,1,\ldots, j$. This completes the proof. \qed
\end{proof}

Given a symmetric Laurent polynomial $a(z)=a_0+\sum_{i=1}^n a_i(z^i+z^{-i})$, denote $P_\alpha(a)\in\mathcal S_{n,\alpha}$ the matrix
\[
P_\alpha(a)=\sum_{i=0}^n a_iP_{i,\alpha}.
\]
We say that $P_\alpha(a)$ is the QT matrix in $\mathcal S_{n,\alpha}$ associated with the symbol $a(z)$. As a synthesis of the results of this section we may conclude that
\begin{equation}\label{eq:rep}
P_\alpha(a)=T(a)+H_\alpha(a),\quad H_\alpha(a)=\sum_{i=1}^n a_iH_{i,\alpha}.
\end{equation}
That is, the symbol $a(z)$ uniquely defines the Toeplitz part $T(a)$ and the compact (Hankel) part $H_\alpha(a)$ of $P_\alpha(a)$. In particular, any matrix in $\mathcal S_{n,\alpha}$ is the sum of a symmetric Toeplitz matrix and a Hankel matrix associated with the same symbol $a(z)$.

One may wonder if the matrix $P_\alpha(a)$ can be defined in the case where $a(z)=a_0+\sum_{i=1}^\infty a_i(z^i+z^{-i})$ is a Laurent series in $\mathcal{W_S}$.
We provide an answer to this question in the next section.

\section{Boundedness issues}
A natural issue concerns the boundedness of 
\begin{equation}
\label{eq:palphainf}
P_\alpha(a)=\sum_{i=0}^\infty a_i P_{i,\alpha}    
\end{equation}
in the case where
$a(z)=a_0+\sum_{i=1}^\infty a_i(z^i+z^{-i})$ is a power series such that $\sum_{i=0}^\infty |a_i|<\infty$, that is, $a(z)\in\mathcal W_S$. In order to answer this question,
observe that { from \eqref{eq:v} we have}
$\|h_n\|_1=|\alpha|+|\theta|\sum_{i=0}^{n-2}|\alpha|^i$, thus
\begin{equation}\label{eq:normh}
\|H_{n,\alpha}\|_\infty =\|h_n\|_1= |\alpha|+
(1+|\alpha|)\left|1-|\alpha|^{n-1}\right|.
\end{equation}
It is interesting to point out that for $|\alpha|\le 1$, the above quantity is bounded from above by $1+2|\alpha|$ which is independent of $n$.

Recall from \cite{BG:book2005}  that if  $a(z)\in\mathcal W$ then $\|T(a)\|_\infty=\sum_{i\in\mathbb Z}|a_i|<\infty$, moreover, if $a(z)\in\mathcal W_S$ then $\sum_{i=1}^{\infty} |a_i|\le\frac12\|T(a)\|_\infty$. Therefore, if $|\alpha|\le 1$ then $\|P_\alpha(a)\|_\infty\le \|T(a)\|_\infty+\sup_i \|H_{i,\alpha}\|_\infty\cdot\sum_{i=1}^\infty |a_i|$ so that
\[
\|P_{\alpha}(a)\|_\infty\le \left(\frac32+|\alpha| \right)
\|T(a)\|_\infty,\quad \hbox{for } |\alpha|\le 1. 
\]
In other words, for $|\alpha|\le 1$, if  $a(z)\in\mathcal W_S$, then the matrix $P_\alpha(a)$, defined in \eqref{eq:palphainf}, has  bounded infinity norm.

If $|\alpha|>1$, in general for $a(z)\in\mathcal W_S$ it is not guaranteed that $\|P_\alpha(a)\|_\infty<\infty$, in fact, according to \eqref{eq:normh}, the norm $\|H_{n,\alpha}\|_\infty$ grows as $O(|\alpha|^n)$.
However, assuming that $a(z)\in\mathcal W_S$ is also analytic for $z$ in the annulus 
\begin{equation}\label{eq:annulus}
\mathcal A(r)=\{z\in\mathbb C~|\quad \frac 1r<|z|<r\}
\end{equation}
for some $r>|\alpha|>1$,
then it is possible to show the 
boundedness of $\|P_\alpha(a)\|_\infty$. In fact, from the analyticity of $a(z)$ in $\mathcal A(r)$ it follows that the series $\sum_{n=0}^\infty |\alpha|^n |a_n|$ is convergent, so that $\sum_{n=1}^\infty\|a_n H_{n,\alpha}\|_\infty$ is bounded from above \cite[Theorem 4.4c]{henrici}.

A similar analysis can be carried out by replacing the infinity norm with the 2-norm.
In fact, by definition, we have
$\|H_{n,\alpha}\|_2^2=\rho(H_{n,\alpha}^*H_{n,\alpha})$, where $\rho(\cdot)$ denotes the spectral radius of a matrix and the superscript ``$*$'' denotes conjugate transposition. 
Since $H_{n,\alpha}$ is possibly nonzero only in the $n\times n$ leading principal submatrix $\widehat H_{n,\alpha}$ of $H_{n,\alpha}$, we have $\|H_{n,\alpha}\|_2^2=\rho(\widehat H_{n,\alpha}^*\widehat H_{n,\alpha})$. Moreover we have $|\widehat H_{n,\alpha}^*\widehat H_{n,\alpha}|\le|\widehat H_{n,\alpha}^*|\cdot
|\widehat H_{n,\alpha}|=|\widehat H_{n,\alpha}|^2$, { where the matrix inequality is meant entry-wise}. From the theory of nonnegative matrices \cite{berman}, we know that if $A$ and $B$ are $n\times n$ matrices such that $|B|\le A$ then $\rho(B)\le \rho(A)$, thus we obtain
\[
\|H_{n,\alpha}\|_2^2\le
\rho(|\widehat H_{n,\alpha}|^2)=\rho(|\widehat H_{n,\alpha}|)^2\le
\|\widehat H_{n,\alpha}\|_\infty^2,
\]
where the latter inequality holds since $\rho(A)\le\|A\|_\infty$.
Thus, since $\|H_{n,\alpha}\|_\infty=\|\widehat H_{n,\alpha}\|_\infty$,  from \eqref{eq:normh} we get
\[
\|H_{n,\alpha}\|_2\le |\alpha|+
(1+|\alpha|)\left|1-|\alpha|^{n-1}\right|.
\]
This inequality, together with the property $\|T(a)\|_2=\max_{|z|=1}|a(z)|$, implies that $\|P_\alpha(a)\|_2<\infty$ if $|\alpha|\le 1$, or if $a(z)$ is analytic in the annulus $\mathcal A(r)$ defined in \eqref{eq:annulus}, for some $r>|\alpha|>1$.

Define the following set
\begin{equation}\label{eq:palpha}
\mathcal P_\alpha=\left\{P_\alpha(a)=\sum_{i=0}^\infty a_iP_{i,\alpha}~|\quad a(z)=a_0+\sum_{i=1}^\infty a_i(z^i+z^{-i})\in\mathcal W_S\right\}.
\end{equation}

We may synthesize the analysis of this section by the following result.

\begin{theorem}\label{th:conc}
Let $a(z)=a_0+\sum_{i=1}^\infty a_i(z^i+z^{-i})\in\mathcal W_S$, $\alpha\in\mathbb C$  and let $P_\alpha(a)$ be the matrix defined in \eqref{eq:palphainf}.
Assume that one of the following conditions is satisfied:
\begin{enumerate}
    \item $|\alpha|\le 1$;
    \item $|\alpha|> 1$ and $a(z)$ is analytic in the annulus $\mathcal A(r)$ defined in \eqref{eq:annulus},
    where $r>|\alpha|$.
\end{enumerate}
Then $\| P_\alpha(a)\|_\infty<\infty$,  $\| P_\alpha(a)\|_2<\infty$, and
\[
P_\alpha(a)=T(a)+H_\alpha(a),\quad H_\alpha(a)=\sum_{i=1}^\infty a_iH_{i,\alpha}.
\]
Moreover, if $|\alpha|\le 1$, then the set $\mathcal P_\alpha$ is a Banach algebra
 with respect to $\|\cdot\|_\infty$ and $\|\cdot\|_2$. If $|\alpha|>1$, then for any fixed $r>|\alpha|$, the set $\mathcal P_\alpha(r)=\left\{P_\alpha(a) ~|\quad a(z)\in\mathcal W_S \textrm{ analytic in}\right.$ $\left.\mathcal{A}(r)\right\}$ is a Banach algebra.
\end{theorem}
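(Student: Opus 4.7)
My plan is to split the theorem into three parts: finiteness of $\|P_\alpha(a)\|_\infty$ and $\|P_\alpha(a)\|_2$, the decomposition $P_\alpha(a) = T(a) + H_\alpha(a)$, and the Banach algebra property. The first two follow almost directly from the estimates built up earlier in this section, while the third requires genuinely new work.

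For the norm bounds and the decomposition I would assemble what has already been established. The inequality \eqref{eq:normh} gives $\|H_{n,\alpha}\|_\infty \le 1 + 2|\alpha|$ uniformly in $n$ when $|\alpha| \le 1$, and the bound $\|H_{n,\alpha}\|_2 \le \|H_{n,\alpha}\|_\infty$ derived earlier handles the spectral norm. In case (1), the summability $\sum |a_i| < \infty$ then makes $\sum_i a_i H_{i,\alpha}$ absolutely convergent in both operator norms. In case (2), analyticity of $a$ in $\mathcal A(r)$ forces $|a_n| = O(\rho^{-n})$ for any $\rho \in (|\alpha|, r)$, so $|a_n|\,\|H_{n,\alpha}\|_\infty = O((|\alpha|/\rho)^n)$ is summable. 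Combined with the classical bounds $\|T(a)\|_\infty, \|T(a)\|_2 \le \|a\|_W$, this shows that the partial sums $\sum_{i=0}^N a_i P_{i,\alpha}$ converge in operator norm to $T(a) + H_\alpha(a)$, which by definition is $P_\alpha(a)$.

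The core of the proof is the multiplicative identity $P_\alpha(a) P_\alpha(b) = P_\alpha(ab)$. I would prove this first for symmetric Laurent polynomial symbols and then extend by continuity. The corollary to Theorem~\ref{thm:main}, read symbolically together with \eqref{eq:id}, tells us that $A_\alpha^n = P_\alpha((z+z^{-1})^n)$, so the univariate functional calculus $p \mapsto p(A_\alpha)$ agrees with $a \mapsto P_\alpha(a)$ on symbols of the form $p(z+z^{-1})$. Since $\{(z+z^{-1})^n\}_{n \ge 0}$ is a basis of the space of symmetric Laurent polynomials, bilinearity forces $P_\alpha(a) P_\alpha(b) = P_\alpha(ab)$ for all polynomial $a, b \in \mathcal W_S$. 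To pass to the infinite case, I would truncate $a, b$ to their $N$th partial sums $a^{(N)}, b^{(N)}$, apply the polynomial identity, and let $N \to \infty$: the operator-norm estimate from the previous step, together with the fact that $a^{(N)} b^{(N)} \to ab$ in the Wiener norm (case 1) or in the Banach algebra of $\mathcal A(r)$-analytic symmetric symbols (case 2), transfers the identity to the full series.

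The point I expect to require the most care is completeness, needed to lift the normed algebra into a Banach algebra. My preferred route is to view $a \mapsto P_\alpha(a)$ as a continuous injection from a complete symbol algebra (the Wiener algebra $\mathcal W_S$ in case 1, or the Banach algebra of symmetric symbols analytic on $\mathcal A(r)$ in case 2) into bounded operators on $\ell^\infty$ and $\ell^2$. Transporting the symbol norm makes $\mathcal P_\alpha$ complete, and since this transported norm dominates each operator norm, submultiplicativity passes through and the operator norms $\|\cdot\|_\infty$ and $\|\cdot\|_2$ each inherit a Banach algebra structure on the same underlying set $\mathcal P_\alpha$ (respectively $\mathcal P_\alpha(r)$). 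The mild technical point is injectivity of the symbol map, which I would handle by invoking the uniqueness of the Toeplitz part modulo compacts: if $P_\alpha(a) = 0$ then $T(a)$ is compact, hence $a = 0$.
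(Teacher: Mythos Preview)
Your treatment of the norm bounds and the decomposition $P_\alpha(a)=T(a)+H_\alpha(a)$ matches the paper's, which simply refers back to the estimates preceding the theorem. For the multiplicative law $P_\alpha(a)P_\alpha(b)=P_\alpha(ab)$ you take a genuinely different route: you read the corollary to Theorem~\ref{thm:main} together with \eqref{eq:id} as $A_\alpha^n=P_\alpha\bigl((z+z^{-1})^n\bigr)$, so that on symmetric Laurent polynomials the map $a\mapsto P_\alpha(a)$ coincides with the polynomial functional calculus $p\mapsto p(A_\alpha)$, whence multiplicativity is automatic; you then pass to series by density and continuity. The paper instead argues directly for arbitrary $a,b\in\mathcal W_S$: expanding $P_\alpha(a)P_\alpha(b)$ via Theorem~\ref{thm:bg} gives $T(ab)+K$ with $K$ compact, so $P_\alpha(ab)-P_\alpha(a)P_\alpha(b)$ is a compact element of $\mathcal P_\alpha$; but a compact element of $\mathcal P_\alpha$ has zero Toeplitz part, hence zero symbol, hence is the zero matrix. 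The paper's argument is shorter and avoids the limiting step; yours is more structural and makes the identification with $p(A_\alpha)$ explicit, which is a pleasant observation.

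Your completeness argument, however, has a gap. You transport the (complete) Wiener norm to $\mathcal P_\alpha$ and note that it dominates the operator norms, but domination goes the wrong way: a stronger complete norm does not force a weaker norm to be complete on the same set. To conclude that $(\mathcal P_\alpha,\|\cdot\|_\infty)$ or $(\mathcal P_\alpha,\|\cdot\|_2)$ is a Banach algebra you would still need to show that $\mathcal P_\alpha$ is \emph{closed} in the respective operator norm, i.e.\ that an operator-norm limit of matrices $P_\alpha(a_n)$ is again of the form $P_\alpha(a)$; your injectivity remark does not supply this. In fairness, the paper's own proof only establishes the algebra structure (closure under sums and products) and is silent on completeness, so this is a point where both treatments would benefit from an additional argument.
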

\begin{proof}
The boundedness of $\|P_\alpha(a)\|_\infty$ and $\|P_\alpha(a)\|_2$ follows from the arguments discussed before the theorem. Concerning the structure of algebra of the set $\mathcal P_\alpha$, observe that if $P_\alpha(a)=T(a)+H_\alpha(a)$ and $P_\alpha(b)=T(b)+H_\alpha(b)$ are matrices in $\mathcal P_\alpha$, so that $a(z),b(z)\in\mathcal W_S$, then $P_\alpha(a)+P_\alpha(b)=T(a)+T(b)+H_\alpha(a)+H_\alpha(b)$, and, by linearity,  $P_\alpha(a)+P_\alpha(b)=T(a+b)+H_\alpha(a+b)=P_\alpha(a+b)\in\mathcal P_\alpha$. Similarly, $P_\alpha(a)P_\alpha(b)=T(a)T(b)+H_\alpha(a)P_\alpha(b)+
T(a)H_\alpha(b)$, and in view of Theorem \ref{thm:bg} we have
\[
P_\alpha(a)P_\alpha(b)=T(ab)+H(a_-)H(b_+)+H_\alpha(a)P_\alpha(b)+
T(a)H_\alpha(b).
\]
That is, $P_\alpha(a)P_\alpha(b)=T(ab)+K$, for $K$ compact.
On the other hand, since $c(z)=a(z)b(z)\in\mathcal W_S$, then $P_\alpha(c)\in\mathcal P_\alpha$ so that $P_\alpha(c)-P_\alpha(a)P_\alpha(b)=H_\alpha(c)-K\in\mathcal P_\alpha$ is compact. { But a compact matrix in $\mathcal P_\alpha$} has null Toeplitz part, i.e., it is associated with the null symbol, thus,  it coincides with the null matrix  so that $P_\alpha(c)=P_\alpha(a)P_\alpha(b)$. This proves that $\mathcal P_\alpha$ is a matrix algebra. \qed
\end{proof}

\subsection{Computational issues}
From the proof of Theorem \ref{th:conc}, it turns out that
given $a(z),b(z),c(z)\in\mathcal W_S$ such that $c(z)=a(z)+b(z)$, then $P_\alpha(a)+P_\alpha(b)=P_\alpha(c)$. Similarly, if $c(z)=a(z)b(z)$ then $P_\alpha(a)P_\alpha(b)=P_\alpha(c)$. Finally if $a(z)\ne 0$ for $|z|=1$, then $c(z)=1/a(z)\in\mathcal W_S$ and $P_\alpha(c)P_\alpha(a)=I$. 

As a consequence of the above properties we may say that if $f(z):\mathbb C\to\mathbb C$ is a rational function defined in the set $a(\mathbb T)$, so that $f(a(z))\in\mathcal W_S$, then $f(P_\alpha(a))=P(f(a))$ if $|\alpha|\le 1$. Moreover, the same equality holds if $|\alpha|>1$ and $f(a(z))$ is analytic in $\mathcal A(r)$ for { some} $r>|\alpha|$.

These properties reduce the matrix arithmetic in the class $\mathcal P_\alpha$ to function arithmetic in $\mathcal W_S$, that can be easily implemented whatever is the value of $\alpha$ with $|\alpha|\le 1$, by following the same techniques developed in \cite{bmr:cqt} for QT arithmetic.
The same arithmetic can be implemented for $|\alpha|>1$ in the case where the symbols are analytic in $\mathcal A(r)$.

From the algorithmic point of view, we restrict the attention to the case where $a(z)=a_0+\sum_{i=1}^na_i(z^i+z^{-i})$ and $b(z)=b_0+\sum_{i=1}^mb_i(z^i+z^{-i})$ are Laurent polynomials. In the case of Laurent series we need to truncate $a(z)$ and $b(z)$ to a finite degree.
In this case, the operation of addition is almost immediate since for $c(z)=a(z)+b(z)$ we have
$c_i=a_i+b_i$, $i=0,\ldots,\max(m,n)$ where we assume that $a_i=0$ and $b_i=0$ for $i$ out of range.

The case where $c(z)=a(z)b(z)$ is slightly more complicated. Here, for a given integer $N$ we need a principal $N$-th root of 1, $\omega_N=\cos(2\pi/N)+\iu \sin(2\pi/N)$, where $\iu$ is the imaginary unit such that $\iu^2=-1$.  The Laurent polynomial $c(z)$ is such that $z^{m+n}c(z)$ is a polynomial of degree at most $2(m+n)$. In order to compute the coefficients $c_i$, it is sufficient to fix an integer $N>2(n+m)$,  and compute the values $a(\omega_N^i)$,  $i=1,\ldots,N$  with 2 FFTs, and then compute the $N$ products $y_i=a(\omega_N^i)b(\omega_N^i)$ for $i=1,\ldots, N$. The values of the coefficients $c_i$ are finally recovered by interpolating the pairs $(\omega_N^i,y_i)$ by means of IFFT.
The overall cost of this procedure is $O((m+n)\log(m+n))$ arithmetic operations.

A similar approach can be applied to compute $P_\alpha(a)^{-1}=P_\alpha(1/a(z))$ in the case where $a(z)\ne 0$ for $|z|=1$. In fact, in this case the function $1/a(z)$ is a Laurent series that can be approximated by a Laurent polynomial $c(z)$ of suitable degree. The coefficients and the degree of $c(z)$, together with the ratio $\hbox{cond}=\max_{z\in\mathbb T}|a(z)|/\min_{z\in\mathbb T}|a(z)|$, can be determined  dynamically by means of
Algorithm~\ref{alg:inv}.

\begin{algorithm}
\caption{Inverting a Laurent polynomial}\label{alg:inv}

{\sc Input:} $\epsilon>0$, $a_i$, $i=0,\ldots,n$, such that $a(z)=a_0+\sum_{i=1}^na_i(z^i+z^{-i})$

{\sc Output:} An estimate of cond$=\max_{|z|=1} |a(z)|/\min_{|z|=1}|a(z)|$, an integer $N>0$, and\\ coefficients $c_i$, $i=0,1,\ldots,N/2$ such that either $\max_i|r_i|\le\epsilon$, where $r(z)=a(z)c(z)-1$, \\
and $c(z)=\sum_{i=-N/2+1}^{N/2}c_{|i|}z^i$, or cond$>\epsilon^{-1}$

\quad Choose $N=2^k$, $k=\lceil\log_2 n\rceil$  so that $N> n$ and choose $\delta>\epsilon$

\quad {\sc while} $\delta>\epsilon$ {\sc do}

\quad\quad Set $N=2N$,  $w=(w_i)$, $w_i=\omega_N^i$, $i=1,\ldots,N$

\quad\quad Compute $y_i=a(w_i)$, $i=1,\ldots,N$

\quad\quad Interpolate $(w_i,1/y_i)$, $i=1,\ldots,N$ and get $t_i$, $i=-N/2+1,\ldots, N/2$ such that

\quad\quad\quad $t(w_i)=1/y_i$ for $i=1,\ldots,N$, where $t(z)=\sum_{i=-N/2+1}^{N/2}t_iz^i$

\quad\quad Set $r(z)=a(z)t(z)-1=:\sum_{i=-N/2-1}^{N/2}r_iz^i$ and $\delta=\max_i|r_i|$

\quad\quad Set cond=$\max_i(|y_i|)/\min_i(|y_i|)$

\quad\quad {\sc if} cond$>\epsilon^{-1}$ {\sc then} print 'ill conditioned problem' and exit

\quad {\sc end}

\quad Output $N/2$, cond, and $c_i:=t_i$, $i=0,\ldots,N/2$.
\end{algorithm}

The value of ``cond'' provides an estimate of the numerical invertibility of $a(z)$. Indeed, if cond=$\infty$ the function is not invertible and if cond$>\epsilon^{-1}$,
where $\epsilon$ is the machine precision, the problem of inverting $a(z)$ is numerically ill-conditioned. The cost of the above algorithm is $O(N\log N)$ arithmetic operations, where $N$ is the numerical degree of the Laurent series $1/a(z)$.

\section{Some special cases}

By choosing specific values for $\alpha$ we obtain very special subalgebras of the set of QT matrices having symmetric Toeplitz part.

If $\alpha=0$, the matrices $P_n$ have the following form, where we report the case for $n=4$:
\[
P_4=\begin{bmatrix}
 &&-1&\phantom{-1}0&\phantom{-1}1&\\
 &-1&&&&\phantom{-1}1\\
-1&&&&&&\phantom{-1}1\\
\,0&      &&&&&&\phantom{-1}1\\
\,1     &      &&&&&&&\ddots\\
&\,1\\
&&\ddots
\end{bmatrix},
\]
and for the matrix $P(a)$ we have 
\[
P(a)=T(a)-H((z^{-1}a(z))_+)
.
\]

If $\alpha=\pm 1$ we have
\[
P_4=\begin{bmatrix}
&&&\pm 1&1&\\
&&\pm 1&&&1\\
&\pm 1&&&&&1\\
\pm 1&      &&&&&&1\\
1     &      &&&&&&&\ddots\\
&1\\
&&\ddots
\end{bmatrix},
\]
moreover 
\[
P(a)=T(a)+H(a_\pm(z))
.
\]

\subsection{The finite case}
The analysis performed in Section \ref{sec:basic} for semi-infinite matrices can be easily applied to $m\times m$ matrices. More specifically, from Theorem \ref{thm:main} we may deduce a structural characterization of the matrix algebra generated by the powers of the $m\times m$ matrix
\[
A_{\alpha,\beta}=\begin{bmatrix}
\alpha&  1  \\
1     &  0   &1\\
      &\ddots&\ddots &\ddots\\
      &      &1      &0     &1\\
      &      &       &1     &\beta
\end{bmatrix}.
\]
This algebra has been investigated in  \cite{bozzo-phd,bozzo-difiore,ekstrom,olshevsky} in the cases where $\alpha,\beta\in\{0,1,-1\}$.

In fact, by adapting to the finite case the same argument used to prove Theorem \ref{thm:main} and its corollaries, we find that
\[
A_{\alpha,\beta}^n=\sum_{i=0}^{\lfloor\frac {n-1}2\rfloor}
\binom{n}{i}
%{n\choose i}
P_{n-2i}^{(\alpha,\beta)}+\varphi_n I, \quad n=1,\ldots,m-1,
\]
with $P_0^{(\alpha,\beta)}=I$ and
\[
P_i^{(\alpha,\beta)}=T_m(z^i+z^{-i})+H_i^{(\alpha)}+J_mH_i^{(\beta)}J_m,\quad i=1,\ldots,m-1,
\]
where $T_m(z^i+z^{-i})$ is the $m\times m$ leading principal submatrix of $T(z^i+z^{-i})$,
$H_i^{(\alpha)}$ is the $m\times m$ leading principal submatrix of the semi-infinite Hankel matrix $H_{i,\alpha}$ and $J_m$ is the $m\times m$ permutation matrix with ones in the anti-diagonal. An example of the $m\times m$ matrix $P_i^{(\alpha,\beta)}$ is given for $i=4$ and $m=8$:
\[
P_4^{(\alpha,\beta)}=
\begin{bmatrix}
\theta\alpha^2&\theta\alpha&\theta&\alpha&1\\
\theta\alpha&\theta&\alpha&&&1\\
\theta&\alpha&&&&&1\\
\alpha& &&&&&&1\\
1& & & & & & &\beta\\
 &1& & & & &\beta&\nu\\
 & &1& & &\beta&\nu&\nu\beta\\
 & & &1&\beta&\nu&\nu\beta&\nu\beta^2
\end{bmatrix}
\]
where $\theta=\alpha^2-1$, $\nu=\beta^2-1$.

For $\alpha=\beta=0$ we obtain the $\tau$ algebra analyzed in \cite{bini-capovani}, where the matrices are simultaneously diagonalized by the discrete sine transform of kind 1. For $\alpha,\beta\in\{-1,0,1\}$  the corresponding matrices are simultaneously diagonalized by different types of discrete sine and discrete cosine transforms,  see \cite{olshevsky} for more details. These classes have been used to precondition positive definite Toeplitz systems.

For any value of $\alpha$ and $\beta$, the matrices in the algebras generated this way can be written as the sum of a Toeplitz and a Hankel matrix.

\section{QT matrix representation}\label{sec:represent}

The implementation of QT matrix arithmetic given in the package {\tt CQT-Toolbox} of \cite{bmr:cqt} relies on the fact that, given two QT-matrices
$A=T(a)+E_A$ and $B=T(b)+E_B$ then $C=A+B=T(c)+E_C$, where $c(z)=a(z)+b(z)$ and $E_C=E_A+E_B$ is still compact. In fact, since the matrices are finitely 
representable, then $E_A$ and $E_B$ have finite rank so that also rank$(E_C)$ is 
finite since $\hbox{rank}(E_C)\le\hbox{rank}(E_A)+\hbox{rank}(E_B)$. 
Similarly, relying on Theorem \ref{thm:bg}, the product $C=AB$, is written as
\[
C=T(c)+E_AT(b)+T(a)E_B+E_AE_B+H(a_-)H(b_+)=: T(c)+E_C,
\]
where $E_C=E_AT(b)+T(a)E_B+E_AE_B+H(a_-)H(b_+)$ still has finite rank that in principle can grow large, in fact 
\begin{equation}\label{eq:rkt}
\hbox{rank}(E_C)\le \hbox{rank}(E_A)+\hbox{rank}(E_B)+\hbox{rank}(H(a_-)H(a_+)).
\end{equation}
For this reason, a strategy of compression and approximate representation is introduced in \cite{bmr:cqt} in order to reduce the growth of the rank when many arithmetic matrix operations are performed. This compression strategy is the most time consuming part of the software {\tt CQT-Toolbox} \cite{bmr:cqt}. 

Assuming that the decomposition $E_A=U_AV_A^T$, 
$E_B=U_BV_B^T$ and $H(a_-)H(b_+)=U_HV_H^T$ are available, where matrices $U_A,V_A$ are formed by $k_A$ columns, $U_B,V_B$ are formed by $k_B$ columns, and
$U_H,V_H$ are formed by $k_H$ columns,  
the compression strategy requires the computation of the numerical rank
of the matrix $[U_A,T(a)U_B,U_H][V_A+T(b)^TV_B,V_B,V_H]^T$ that is formed by $k_A+k_B+k_H$ columns. In order to perform the compression, two matrices $U_C$, $V_C$ having less than
$k_A+k_B+k_H$ columns are computed such that $E_C$ is approximatively given by $U_CV_C^T$, up to an error of the order of the machine precision.

This computation, usually performed by means of SVD,
is time consuming especially in the cases where the numerical size of the compact correction gets large. This happens, say, when the bandwidth of the Toeplitz part grows large in any algorithm using the package {\tt CQT-Toolbox}.

Concerning matrix inversion, in \cite{bmr:cqt} the inverse of $A=T(a)+E_A=T(a)(I+T(a)^{-1}E_A)$ is computed through the identity \[
A^{-1}=(I+T(a)^{-1}E_A)^{-1}T(a)^{-1},
\]
where the inverse of $I+T(a)^{-1}E_A$ is computed by means of the Sherman-Woodbury-Morrison (SWM) formula, and the inverse of the Toeplitz matrix $T(a)$ is computed  relying either on the cylic reduction algorithm, or on FFT. For more details we refer the reader to \cite{bmr:cqt}.

For QT matrices $A=T(a)+E_A\in\mathcal{QT_S}$, where $a(z)\in\mathcal W_S$, the results of Section~\ref{sec:basic}
suggest us to represent 
$A$ 
in the form 
\begin{equation}\label{eq:repr}
A=P_\alpha(a)+K_A,
\end{equation}
where
$P_\alpha(a)=T(a)+H(a)\in\mathcal P_\alpha$, and $K_A=E_A-H(a)$ is compact. 

In fact in this case we find that, for $A=P_\alpha(a)+K_A$, $B=P_\alpha(b)+K_B$, we have
\[
AB=P_\alpha(c)+K_C,\quad K_C=P_\alpha(a)K_B+K_AP_\alpha(b)+K_AK_B,
\]
and $c(z)=a(z)b(z)$. Observe that, with this representation, we simply have 
\begin{equation}\label{eq:rkp}
\hbox{rank}(K_C)\le\hbox{rank}(K_A)+\hbox{rank}(K_B),
\end{equation}
and the Hankel part $H(a_-)H(b_+)$ disappears since it is included in $P_\alpha(c)$. This way, the compression of the correction requires the computation of the numerical rank of the matrix $[\hat U_A,P_\alpha(a)\hat U_B]  [P_\alpha(b)\hat V_A + \hat V_B (\hat U_B^T \hat V_A),\hat V_B]^T $, where $K_A=\hat U_A \hat V_A^T$ and 
$K_B=\hat U_B \hat V_B^T$.

A similar advantage is encountered in the computation of $A(a)^{-1}=(I+P_\alpha(a^{-1})K_A)^{-1}P_\alpha(a^{-1})$. In fact, in this case, the computation of $a(z)^{-1}$, performed by means of Algorithm 1, is much easier than inverting $T(a)$ and the major computational effort is just applying the  SWM formula to the matrix $I+P_\alpha(a^{-1})K_A$.

In general, if $f(x)$ is a rational function defined on the set $a(\mathbb T)$, then $f(A)=P_\alpha(f(a))+K_f$, where $K_f$ is compact. Moreover, in a straight-line program whose generic step is of the kind $S_m=S_i \circ S_j$ where $\circ\in\{+, =, *, / \}$ is any of the four operations,  $i,j<m$ and $S_\ell=P_\alpha(s_\ell)+K_\ell\in\mathcal{QT_S}$, one finds that $S_m=P_\alpha(s_m)+K_m$, with $s_m(z)=s_i(z)\circ s_j(z)$.
Moreover, the compact correction $K_m$ is related to the compact corrections $K_i$ and $K_j$ of the operands and to the specific operation $\circ$ used at the $m$-th step. 

Instead, by using the representation $S_m=T(s_m)+E_m$ given in \cite{bmr:cqt}, one finds that $S_m=T(s_i\circ s_j)+E_m$, where this time $E_m$ is related not only to the corrections $E_i$ and $E_j$ of the operands, but also depends on the correction 
$T(s_i)\circ T(s_j)-T(s_i\circ s_j)$ that, in the case of multiplication or inversion, can have large rank.

In other words, one expects that the matrices to be compressed in the representation of \cite{bmr:cqt} have rank larger than the rank of the matrices to be compress in the new representation through  $\mathcal P_\alpha$. 

The actual advantages of the new representation will be shown in the next section concerning the numerical experiments.

\section{Numerical experiments}
We say that $A=P_\alpha(a)+K_A\in\mathcal{QT_S}$ is finitely representable if the symbol $a(z)$ is a Laurent polynomial and the correction $K_A$ has a finite support, that is, its entries are zero outside its leading principal $m\times n$ submatrix for suitable finite values of $m$ and $n$.

We have implemented the representation of a matrix in $\mathcal{QT_S}$ in terms of the pair $(P_\alpha(a),K_A)$ together with the basic arithmetic operations. 

In this section, we report the results of 
a preliminary experimentation 
that we have performed in order to compare the efficiency of the above representation with the standard representation of general QT matrices given in the
{
{\tt CQT-Toolbox} of \cite{bmr:cqt},  see
\url{ https://github.com/numpi/cqt-toolbox}.
The experiments have been performed in Matlab v. 9.5.0.1033004 (R2018b) on a laptop with an Intel I3 processor and 16GB RAM. The functions and the scripts that execute the tests can be downloaded from \url{https://numpi.dm.unipi.it/scientific-computing-libraries/sqt/}
}

The first test concerns the solution of the quadratic matrix equation 
\begin{equation}\label{eq:qme}
AX^2+BX+C=X
\end{equation}
in the case where $A=T(a)+E_A$, $B=T(b)+E_B$, $C=T(c)+E_C$ are nonnegative matrices in $\mathcal{QT_S}$ such that $A+B+C$ is stochastic.
In this case, the solution of interest is the minimal nonnegative solution $G$
that exists and is a QT matrix, i.e., $G=T(g)+E_G$ \cite{bmm:simax}. This solution
can be computed
by means of the classical functional iterations  
\cite{blm:book}, \cite{meini:functit},
\begin{equation}\label{eq:functits}
\begin{split}
    &X_{k+1}=AX_k^2+BX_k+C,\\
    &X_{k+1}=(I-B)^{-1}(AX_k^2+C),\\
    &X_{k+1}=(I-AX_k-B)^{-1}C,~~k=0,1,\ldots,
\end{split}
\end{equation}
known as {\em Natural}, {\em Traditional}, and {\em U-based} iteration, respectively, initialized with $X_0=0$. 

We have performed experiments by representing the matrix coefficients relying both on the standard QT-matrix representation given in \cite{bmr:cqt}, and on the representation \eqref{eq:repr} provided in Section \ref{sec:represent}. 

Let 
{ $X_{k+1}-X_k=T(\sigma_k)+E_k$} 
be the standard representation of the QT matrix $X_{k+1}-X_k$. Similarly, let
{$X_{k+1}-X_k=P_\alpha(\sigma_k)+K_k$}
be the new representation. Denote $\epsilon_k$  the maximum modulus of the coefficients of the Laurent series $\sigma_k(z)$. The iterations have been halted if 
%$\max(\epsilon_k,\|\mathcal E_k\|_\infty)<5\cdot 10^{-15}$, 
{$\max(\epsilon_k,\|E_k\|_\infty)<5\cdot 10^{-15}$}, 
and if 
%$\max(\epsilon_k,\|\mathcal K_k\|_\infty)<5\cdot 10^{-15}$
{ $\max(\epsilon_k,\|K_k\|_\infty)<5\cdot 10^{-15}$}
for the standard and for the new representation, respectively.

We have chosen the following values for the symbol of the Toeplitz part of $A$, $B$, and $C$:
{ $a(z)=\frac1{100}(10z^{-1}+10+10z)$}, $b(z)=\frac1{100}(8z^{-1}+23+8z)$, $c(z)=\frac1{100}(10z^{-1}+11+10z)$, respectively. While the compact corrections have been chosen as {$E_A=\frac{10}{100}E$}, $E_B=\frac8{100}E$, $E_C=\frac{10}{100}E$, where $E=e_1e_1^T$. With these values, $A+B+C$ is stochastic and the Markov chain associated with the random walk in the quarter plane defined by $A,B$ and $C$ is pretty close to be null recurrent so that the functional iterations \eqref{eq:functits} take many steps to arrive at numerical convergence.

Observe that, since for $v(z)=v_{-1}z^{-1}+v_0+v_1 z$ we have $T(v)=P_0(v)$,  the matrices $A,B,C$ can be viewed  as $A=P_0(a)+E_A$, $B=P_0(b)+E_B$, $C=P_0(c)+E_C$. On the other hand, 
% from the choice of $E_A,E_B$, and $E_C$
we also have $A=P_1(a)$, $B=P_1(b)$, $C=P_1(c)$, that is, $A,B,C\in\mathcal P_1$. Therefore, since $\mathcal P_1$ is an algebra and $A,B,C\in\mathcal P_1$, then the sequences $X_k$ generated by the three iterations \eqref{eq:functits} belong to $\mathcal P_1$.
 This way, $X_k$ can be written as $X_k=P_1(x_k)$, where $x_k(z)\in\mathcal{W_S}$ satisfies one of the iterations
 \[
 \begin{split}
     &x_{k+1}(z)=a(z)x_k(z)^2+b(z)x_k(z)+c(z),\\
     &x_{k+1}(z)=(1-b(z))^{-1}(a(z)x_k(z)^2+c(z)),\\
     &x_{k+1}(z)=(1-a(z)x_k(z)-b(z))^{-1}c(z),~~k=0,1,\ldots
 \end{split}
 \]
 according to the corresponding iteration in \eqref{eq:functits}, respectively, with $x_0(z)=0$.
 Moreover, since $\lim_k\|x_k(z)-g(z)\|_W=0$ (see \cite{bmm:simax}), where, for any $|z|=1$, $g(z)$ is the solution of minimum modulus of the equation $a(z)x(z)^2+b(z)x(z)+c(z)=x(z)$ (see \cite{bmmr:scicomp}),
 we have $G=P_1(g)$.

If we use the representation  $A=P_0(a)+E_A$, $B=P_0(b)+E_B$, $C=P_0(c)+E_C$, then for the sequences $X_k$ generated by \eqref{eq:functits}, we have $X_k=P_0(x_k)+K_k$, where $K_k$ is compact. Moreover, $G=\lim_k X_k=P_0(g)+K$, and $K=\lim_k K_k$, where convergence is meant in the infinity norm. 

In Table \ref{tab:qme}, for each fixed-point method { in \eqref{eq:functits}} denoted as 1,2, and 3, we report the CPU time, the number of iterations, the length of the symbol $g(z)$, the size and the rank of the compact correction in $G$ together with  { residual error $\|AX^2+BX+C-X\|_\infty$, where $X$ is the computed solution. }
{ We also report, between parentheses and in bold, the CPU time obtained when the representation $X_i=P_1(x_i)$ is used. In this case, the operations involving the compact correction are avoided and the CPU time is smaller.}

\begin{table}%[]
\scriptsize
    \centering
    \begin{tabular}{c|ccc|ccc}
   &\multicolumn{3}{c|}{$\mathcal{QT_S}$}&    \multicolumn{3}{c}{$\mathcal{QT}$}\\\hline
  Algorithm&1&2& 3&
         1&2&3\\\hline
    CPU   &40.3 ({\bf 0.77})&29.7 ({\bf 0.54})&47.6 ({\bf 0.71})&125.1&172.9&88.3    \\
    Iterations &2007&1289&719&2124&1333&700   \\
    Symbol size &1203&1204&1215&1086&1095&1070\\
    Correction size &1401&1407&1405&1206&1201&1446\\
    Correction rank &23&23&23&19&19&19\\ 
    Error &5.9E-15&2.9E-15&1.4E-15 &2.0E-15 &6.0E-16 &6.6E-16
    \end{tabular}
    \caption{\footnotesize Solution of the quadratic matrix equation \eqref{eq:qme} by means of Algorithm 1, 2, and 3 of \eqref{eq:functits} where the coefficients are QT matrices in $\mathcal P_1$, represented either as entries in $\mathcal P_1$ (CPU time between parentheses and in bold), or in the form $W+K$ for $W\in\mathcal P_0$ and $K$ compact (column $\mathcal{QT_S}$), or in the form $T+E$, where $T$ is Toeplitz and $E$ is compact (column $\mathcal{QT}$). Besides the CPU time in seconds, the table reports the number of iterations, the number of coefficients in the symbol, the size and the rank of the correction, { together with the infinity norm of the residual error $\|AG^2+BG+C-G\|_\infty$, where $G$ is the computed solution}. 
    }
    \label{tab:qme}
\end{table}

We may see how the new representation provides a speed-up in the CPU time that ranges roughly from 1.6 to  5.8. Moreover, comparing with the representation of $X_i$ as matrix in $\mathcal P_1$, we have speedups in the range $[124, 320]$.
Observe also that the residual errors are close to the machine precision.

We may observe a slight discrepancy between the number of iterations required for the two different representations. This is due to the fact that the stop condition involves the norm of the compact correction that can be different in the two representations. 
{ It must be said that in the experiments involving the {\tt CQT-toolbox}, cpu time, number of iterations, and errors can slightly change in different executions of the code. The reason is that the compression operation performed in the {\tt CQT-toolbox} is randomized.}

The second test concerns the computation of the square root of a matrix $A$ performed by means of the algorithm  \cite[Equation (6.20)]{higham:book} designed in \cite{iannazzo}.
We have considered the class of matrices given by $A_\gamma=T(a)$, where { $T(a)$ is the symmetric Toeplitz matrix associated with the symbol 
$a(z)=\gamma+4(z+z^{-1})+3(z^2+z^{-2})+2(z^3+z^{-3})+(z^4+z^{-4})]$}, for $\gamma>5$. This matrix is positive definite for $\gamma>5$, while if $\gamma=5$ it is not invertible. 

Table \ref{tab:test1} reports the CPU time in seconds, the number of coefficients of the symbol, the size and the rank of the correction,  together with the residual error $\|X^2-A\|_\infty$, where $X$ is the computed matrix, for values of $\gamma =5+\delta$ with $\delta=10^{-1}$, $10^{-2}$, $10^{-3}$. Between parentheses the CPU time required for computing the symbol is reported.
This time corresponds to computing the square root of any matrix of the kind $P_\alpha(a)$.

Observe that also in this case, the speed up with respect to the QT representation ranges from 2.22 to 3.15. While comparing the CPU time for computing the symbol, i.e., for computing the square root of $A=P_\alpha(a)$,  with the time required by the QT representation we get  speed-ups in the range $[78,233]$.

In all the experiments that we have performed, the residual errors obtained with the new representation are comparable with ones obtained with the {\tt CQT-toolbox}.

\begin{table}%[]
\scriptsize
    \centering
    \begin{tabular}{c|ccc|ccc}
    &\multicolumn{3}{c|}{$\mathcal{QT_S}$}&    \multicolumn{3}{c}{$\mathcal{QT}$}\\\hline
    $\delta$&$10^{-1}$&$10^{-2}$& $10^{-3}$&
      $10^{-1}$&$10^{-2}$& $10^{-3}$\\\hline
      
    CPU   &0.35 ({\bf 0.01})&1.32 ({\bf  0.02})&7.93 ({\bf 0.1}) &0.78&4.16&23.3    \\
    
    Iterations &7&8&9&7&8&9   \\
    
    Symbol size &352&1014&2911&317&957&2922\\
    
    Correction size &417&1443&4283& 338&1023&3175\\
    
    Correction rank &45&60&74 &3&3&2\\ 
    
    Residual error &1.0E-14&1.5E-14&1.9E-14 &6.7E-13 &9.5E-13 &1.2E-12
    \end{tabular}
        \caption{\footnotesize Computation of the square root of the SQT matrix $A=T(a)$, where $a(z)=5+\delta+ 4(z+z^{-1})+3(z^{-2}+z^2)+2(z^{-3}+z^3) +(z^{-4}+z^4)$, using the standard ($\mathcal{QT}$) and the  new ($\mathcal{QT_S}$) representation of $A$. Values of the CPU time in seconds, number of iterations, length of the symbol, size and rank of the compact correction, together with the residual error $\|X^2-A\|_\infty$, are reported. Between parentheses, in bold, the CPU time required for $A=P_1(a)$.}
    \label{tab:test1}
\end{table}

\section{Conclusions}
A class, depending on a parameter $\alpha$, of subalgebras of semi-infinite quasi-Toeplitz  matrices having a symmetric symbol has been introduced.
A suitable basis of these subalgebras, as linear spaces, has been determined and exploited to represent in a more efficient way quasi-Toeplitz matrices associated with a symmetric symbol. 
A preliminary experimentation has been performed concerning the computation of the matrix square root and  the solution of a quadratic matrix equation. From our tests it turns out that the new representation is more effective  with respect to the one given in the Matlab toolbox of \cite{bmr:cqt}. 
A structured analysis of these algebras has been carried out both for semi-infinite matrices and for finite matrices where the algebras depend on two parameters $\alpha$ and $\beta$. The latter class includes the algebras investigated in \cite{bini-capovani,bozzo-phd,bozzo-difiore,ekstrom,olshevsky} obtained for $\alpha,\beta\in\{-1,0,1\}$.
To our knowledge, the case of general values of $\alpha$ and $\beta$ has not been analyzed in the literature.

%\section*{Availability of supporting data}
%Not applicable.

%\section*{Competing interests}
%None

\section*{Funding}
MUR Excellence Department Project awarded to the Department of Mathematics, University of Pisa, CUP I57G22000700001. 

European Union - NextGenerationEU under the National Recovery and Resilience Plan (PNRR) - Mission 4 Education and research - Component 2 From research to business - Investment 1.1 Notice Prin 2022 - DD N. 104  2/2/2022, titled Low-rank Structures and Numerical Methods in Matrix and Tensor Computations and their Application, proposal code 20227PCCKZ – CUP 
I53D23002280006.

Spoke 1 ``FutureHPC \& BigData'' of the Italian Research Center on High-Performance Computing, Big Data and Quantum Computing
(ICSC) funded by MUR Missione 4 Componente 2 Investimento 1.4: Potenziamento strutture di ricerca e creazione di ``campioni nazionali di R\&S(M4C2-19)'' - Next Generation EU (NGEU).

GNCS of INdAM. The second author is member of GNCS of INdAM.

%\section*{Authors' contributions}
%The contribution of the authors is equal.

%\section*{Acknowledgments}
%The authors wish to thank two anonymous referees for their many useful comments.

%\section*{Ethical Approval}
%Not applicable

%\bibliographystyle{abbrv}
%\bibliography{biblio.bib}

\end{document}